\definecolor{darkblue}{rgb}{0.0, 0.0, 0.55}
\definecolor{bordeaux}{rgb}{0.34, 0.01, 0.1}
\definecolor{brightbordeaux}{rgb}{0.68, 0.14, 0.14}
\renewcommand{\subset}{\subseteq}
\newtheorem{theorem}            {Theorem}[section]
\newtheorem{corollary}          [theorem]{Corollary}
\theoremstyle{definition}
\newtheorem{definition}         [theorem]{Definition}
\theoremstyle{theorem}
\theoremstyle{definition}
\newtheorem{example}            [theorem]{Example}
\newcommand{\norm}[1]{{ \Vert #1 \Vert }}
\def\B{ {\mathcal B} }
\def\cH{ {\mathcal H} }
\newcommand{\vf}{{\varphi}}
\def\C{{\mathbb C}}
\newcommand{\alg}{\operatorname{Alg}}
\mathchardef\mhyphen="2D
\newcommand{\bsbm}{\left[ \begin{smallmatrix}}
\newcommand{\esbm}{\end{smallmatrix} \right]}
\newcommand{\bbm}{ \begin{bmatrix}}
\newcommand{\ebm}{\end{bmatrix} }
\newcommand{\bpm}{\begin{pmatrix}}
\newcommand{\epm}{\end{pmatrix}}
\newcommand{\bspm}{\left(\begin{smallmatrix}}
\newcommand{\espm}{\end{smallmatrix}\right)}
\newcommand{\bsm}{\begin{smallmatrix}}
\newcommand{\esm}{\end{smallmatrix}}
\newcommand{\bal}{\begin{align*}}
\newcommand{\eal}{\end{align*}}
\title[Algebraicity of operator NC functions]{Noncommutative functions are weakly algebraic on operatorial polynomial polyhedra}
\author[Mark E. Mancuso]{Mark E. Mancuso}
\address{Mark E. Mancuso, Department of Mathematics, Lafayette College, Easton}
\email{mancusom@lafayette.edu}
\subjclass[2010]{46L89, 47A56 (Primary); 46A22, 47A60 (Secondary)}
\keywords{noncommutative function theory, operatorial noncommutative function, operatorial polynomial polyhedron, weak operator topology, weakly algebraic}
\numberwithin{equation}{section}
\begin{document}

\begin{abstract}
An operatorial polynomial polyhedron is a set of the form $$B_{\delta}(\B(\cH))=\{X\in \B(\cH)^d : \Vert\delta(X)\Vert<1\}$$ where $\B(\cH)$ denotes the space of bounded operators on a separable Hilbert space $\cH$, and $\delta$ is a matrix of polynomials in $d$ noncommuting variables. These sets appear throughout the literature on noncommutative function theory.  While much of what has been written involves matricial polynomial polyhedra, there do exist $\delta$ such that the associated $B_{\delta}(\B(\cH))$ is non-empty but contains no matrix points. Algebraicity of operatorial noncommutative functions has  been established in the case that the domain $B_{\delta}(\B(\cH))$ is a balanced set (hence contains the matrix point 0). In this paper, we dispense of such assumptions on the domain and prove that an operatorial noncommutative function on any $B_{\delta}(\B(\cH))$ is weakly algebraic in the sense that its value at each operator tuple $Z$ lies in the weak operator topology closure of the unital algebra generated by the coordinates of $Z$.
\end{abstract}

\maketitle

\section{Introduction}

Free polynomials (polynomials in $d$ noncommuting variables with complex coefficients) may be evaluated at $d$-tuples of square complex matrices of arbitrary size. Such evaluation of a free (noncommutative) polynomial gives rise to a function that is {\it graded} in the sense that $d$-tuples of $n\times n$ matrices are mapped to $n\times n$ matrices. Moreover, straightforward algebraic calculations show that a noncommutative polynomial function $p$ must {\it preserve direct sums and
similarities:} $$p(X\oplus Y)=p(X)\oplus p(Y)\,\,\,\, \text{and} \,\,\,\, p(s^{-1}Xs)=s^{-1}p(X)s$$ for matrix tuples $X$ and $Y$ and an invertible matrix $s$.  There are many other examples of functions besides polynomials that may be evaluated at tuples of matrices of arbitrary size and that satisfy these algebraic properties; such functions are called {\it noncommutative functions}. Noncommutative functions form the foundation for the burgeoning field of noncommutative function theory. 

We provide an overview of the classical {\it matricial} case of noncommutative function theory; the {\it operatorial} case (which is relevant for this paper's main results) will be detailed subsequently. Matricial noncommutative functions are defined on domains that are subsets of the graded space of $d$-tuples of square matrices of arbitrary size. For our purposes, such matrices have entries in the field of complex numbers. That is, we begin with the graded space $$M(\mathbb{C})^d=(M_n(\mathbb{C})^d)_{n=1}^{\infty}$$ where $M_n(\mathbb{C})$ denotes the set of $n\times n$ complex matrices. Thus, an element $X$ of $M(\mathbb{C})^d$ is a $d$-tuple of complex matrices, which we will write using superscripts: $X=(X^1,\ldots,X^d)$. A subset $D\subset M(\mathbb{C})^d$ is called a {\it noncommutative domain} if it is open at each level (i.e. each $D\cap M_n(\mathbb{C})^d$ is open in the Euclidean topology) and is closed under direct sums in the sense that if $X\in D\cap M_n(\mathbb{C})^d$ and $Y\in D\cap M_m(\mathbb{C})^d$, then $X\oplus Y\in M_{n+m}(\mathbb{C})^d.$ For some purposes, authors may also require that each level is closed under conjugation by unitary matrices: if $X\in D\cap M_n(\mathbb{C})^d$ and $U\in \mathcal{U}_n$ is an $n\times n$ unitary matrix, then $U^*XU \in D\cap M_n(\mathbb{C})^d$. By $U^*XU$ we mean the $d$-tuple $U^*(X^1,\ldots,X^d)U=(U^*X^1U,\ldots, U^*X^dU)$. The direct sum $X\oplus Y$ is similarly defined componentwise as $(X^1\oplus Y^1,\ldots, X^d\oplus Y^d)$.

A large and important class of examples of matricial noncommutative domains can be described as follows. Let $\delta$ be an $I\times J$ matrix of  noncommutative polynomials in $d$ variables. We define the (matricial) polynomial polyhedron associated to $\delta$ to be the set $$B_{\delta}=\{X\in M(\mathbb{C})^d : \Vert \delta(X)\Vert <1\}.$$ Such domains have been studied, for example, in \cite{amop, global1, global2} in the context of noncommutative realization theory and in \cite{rowball} in the context of Nevanlinna-Pick interpolation.

A classical {\it matricial noncommutative function} is a function $$f:D\subset M(\mathbb{C})^d\rightarrow M(\mathbb{C})^1$$ defined on a noncommutative domain $D$ that satisfies:
\begin{itemize}
\item $f$ is a graded function: if $X\in D\cap M_n(\mathbb{C})^d$, then $f(X)\in M_n(\mathbb{C})$.
\item $f$ preserves direct sums: if $X\in D\cap M_n(\mathbb{C})^d$ and $Y\in D\cap M_m(\mathbb{C})^d$, then $f(X\oplus Y)=f(X)\oplus f(Y)$.
\item $f$ preserves similarities: if $X \in D\cap M_n(\mathbb{C})^d$ and $s\in M_n(\mathbb{C})$ is an invertible $n\times n$ matrix such that $s^{-1}Xs\in D$, then $f(s^{-1}Xs)=s^{-1}f(X)s$.
\end{itemize} 
Note that the requirement that $f$ is a graded function is needed for the next two requirements to make sense. We refer the reader to \cite{book} for a comprehensive treatment on the foundations of the matricial noncommutative function theory. Such functions have also been studied in the context of noncommutative inversion in \cite{abd, HKM, james}. One remarkable property (see \cite{opanalysis}) of matricial noncommutative functions is that local boundedness automatically implies (complex) differentiability.

Recently \cite{amop, opNC, JKMMP, inverse}, it has become beneficial (and interesting in its own right) to consider a completion of the aforementioned matricial setting. By this we mean that we want to study noncommutative functions that are defined on domains $\Sigma \subset \B(\cH)^d$, where $\cH$ is an infinite-dimensional separable Hilbert space. Such domains and functions are now called {\bf operator NC domains} and {\bf operator NC functions}, respectively. In other words, in this operatorial formulation of noncommutative function theory, we replace the graded space $M(\mathbb{C})^d$ of matrix tuples with the complete space $\B(\cH)^d$ of operator tuples. Operator NC functions preserve direct sums and similarities in a certain precise sense, but since the domain of such a function is not graded (indeed, it lies inside of the Banach space $\B(\cH)^d$), one must make identifications of the underlying Hilbert space $\cH$ with finite or countably infinite direct sums of itself by way of unitary equivalence. Though care is needed to fully define an operator NC domain, such a domain will satisfy in particular the following analogue of being closed under direct sums: if $X,Y \in \Sigma$ then there exists a unitary operator $U:\cH\rightarrow \cH^{(2)}$ such that $$U^{-1}\begin{bmatrix}
X & 0 \\
0 & Y
\end{bmatrix} U\in \Sigma,$$ where we regard the direct sum of $X$ and $Y$ as an operator in $\B(\cH^{(2)})$. An operator NC function $f:\Sigma \rightarrow \B(\cH)$ would then act upon such an element as follows: 
$$f\left(U^{-1}\begin{bmatrix}
X & 0 \\
0 & Y
\end{bmatrix} U\right)=U^{-1}\begin{bmatrix}
f(X) & 0 \\
0 & f(Y)
\end{bmatrix} U.$$ 

The precise definitions of general operator NC domains and operator NC functions, as proposed by the author in \cite{inverse}, are provided in Section 2. These definitions were subsequently used in \cite{opanalysis, opNC}. For now, we are content with providing a few elementary examples of these objects.

\begin{example}
(1) The polynomial function $$f(X,Y)=8iXYX+XY-YX$$ is an operator NC function on the entire operator NC domain $\B(\cH)^2.$

(2) The rational function $$g(X,Y,Z)=Y^{-1}X(1-XYZ)^{-1}$$ defines an operator NC function on the operator NC domain
 \begin{equation}
\{(X,Y,Z)\in \B(\cH)^3: \Vert 1-Y\Vert<1 \,\, \text{and} \,\, \Vert XYZ\Vert<1\}.
\end{equation}

(3) The function of one variable $$h(X)=X^{-1}$$ is an operator NC function on the operator NC domain of all invertible elements of $\B(\cH)$.
\end{example}

The main purpose of this note is not to consider noncommutative functions on general operatorial noncommutative domains, but instead is to focus on a particularly nice class of domains called operatorial polynomial polyhedra.  Let $\delta$ be an $I\times J$  matrix of noncommutative polynomials in $d$ variables. That is, $\delta\in  M_{I\times J}(\C\langle x_1,\ldots , x_d\rangle)$.  We define its associated 
{\bf operatorial polynomial polyhedron} $B_{\delta}(\B(\cH))$ by 
\[
	B_{\delta}(\B(\cH))=\{X\in \B(\cH)^d : \norm{\delta(X)}<1\}.
\]
The norm of $\delta(X)$ is understood to take place in the space $\B(\cH^{(J)},\cH^{(I)}).$ Note that there is no harm in assuming for simplicity that $\delta$ is square ($I=J$); simply add rows or columns of zeros. 

Many important concrete examples of noncommutative operator domains arise as a $B_{\delta}(\B(\cH))$ for some choice of $\delta$. 

\begin{example}
(1) The noncommutative operatorial polydisk $$\B_1(\cH)^d=\{X\in \B(\cH)^d : \max_{1\leq i\leq d} \Vert X^i\Vert <1 \}$$ is a $B_{\delta}(\B(\cH))$ where $\delta$ is the block diagonal matrix with diagonal entries the coordinates of $X$. 

(2) Another important and well-studied example (see for example \cite{pop1, pop2}) is the noncommutative operatorial row ball. That is, if we let $$\mathcal{R}(\B(\cH))=\{X\in \B(\cH)^d : \Vert X^1(X^1)^*+\cdots +X^d(X^d)^*\Vert^{1/2}<1\},$$ then $\mathcal{R}(\B(\cH))$ is readily seen to be $B_{\delta}(\B(\cH))$ where $\delta$ is taken to be the row $\delta(x)=[x^1\cdots x^d]$.

(3) The operator NC domain appearing in (1.1) is the intersection of two operatorial polynomial polyhedra and is thus one itself.
\end{example}

 We say a function $\vf :B_{\delta}(\B(\cH)) \rightarrow \B(\cH)$ is {\bf weakly algebraic}  if, for each operator tuple $Z\in B_{\delta}(\B(\cH))$, the value $\vf(Z)$ lies in the weak operator topology (WOT) closure of the unital subalgebra $\alg(Z)$ of $\B(\cH)$ generated by the coordinates of $Z$. That is, if $$\vf(Z) \in \overline{\alg(Z)}^{{}_{WOT}}$$ for each $Z\in B_{\delta}(\B(\cH))$. The main result of this paper, Theorem 1.3 below, (which is proved as Theorem 3.1) says that an operator NC function defined on any operatorial polynomial polyhedron is weakly algebraic. 

\begin{theorem}
Every operator NC function $\vf :B_{\delta}(\B(\cH)) \rightarrow \B(\cH)$ is weakly algebraic.
\end{theorem}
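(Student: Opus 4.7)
The plan is to argue by WOT-separation. If $\varphi(Z) \notin \cM := \overline{\alg(Z)}^{{}_{WOT}}$, then since $\cM$ is a WOT-closed convex set, the Hahn--Banach theorem produces a WOT-continuous linear functional on $\B(\cH)$ vanishing on $\alg(Z)$ but not at $\varphi(Z)$. Every such functional has the form $T \mapsto \sum_{i=1}^n \langle T v_i, w_i\rangle$ for finitely many $v_i, w_i \in \cH$. Forming $Z^{(n)} = Z \oplus \cdots \oplus Z$, $V = v_1 \oplus \cdots \oplus v_n$, $W = w_1 \oplus \cdots \oplus w_n$, and unitarily identifying $\cH^{(n)} \cong \cH$, the identity $p(Z^{(n)}) = p(Z)^{(n)}$ together with direct-sum preservation of $\varphi$ collapses the problem to a single-vector assertion: if $v, w \in \cH$ satisfy $\langle p(Z) v, w\rangle = 0$ for every polynomial $p$, then $\langle \varphi(Z) v, w\rangle = 0$.

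Set $\cN := \overline{\alg(Z) v}$, the minimal closed $Z$-invariant subspace of $\cH$ containing $v$. The hypothesis then reads $w \perp \cN$, so it suffices to prove $\cN$ is invariant for $\varphi(Z)$: this gives $\varphi(Z) v \in \cN \perp w$. Decompose $\cH = \cN \oplus \cN^\perp$ and write
\[
Z = \begin{pmatrix} A & B \\ 0 & C \end{pmatrix}, \qquad \varphi(Z) = \begin{pmatrix} * & * \\ \Gamma & * \end{pmatrix};
\]
the goal becomes to show the $(2,1)$-block $\Gamma \colon \cN \to \cN^\perp$ vanishes.

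To force $\Gamma = 0$, I use the scaling similarity $s_\lambda := \lambda P_\cN + P_{\cN^\perp} \in \B(\cH)$ for $\lambda > 0$. A direct computation gives
\[
s_\lambda^{-1} Z s_\lambda = \begin{pmatrix} A & \lambda^{-1} B \\ 0 & C \end{pmatrix} \;\xrightarrow[\lambda \to \infty]{\Vert\cdot\Vert}\; A \oplus C.
\]
Since each entry of $\delta$ is a polynomial and $p(A \oplus C) = p(A) \oplus p(C)$ is the block-diagonal compression of $p(Z)$, a reshuffling argument shows $\delta(A \oplus C)$ is unitarily equivalent to $\delta(A) \oplus \delta(C)$, so $\Vert \delta(A \oplus C)\Vert \leq \Vert \delta(Z)\Vert < 1$; thus $A \oplus C \in B_\delta(\B(\cH))$. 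By continuity of $X \mapsto \delta(X)$, the tuples $s_\lambda^{-1} Z s_\lambda$ also lie in $B_\delta(\B(\cH))$ for all $\lambda$ sufficiently large. Similarity preservation of $\varphi$ then yields, for such $\lambda$,
\[
\varphi(s_\lambda^{-1} Z s_\lambda) \;=\; s_\lambda^{-1}\,\varphi(Z)\,s_\lambda \;=\; \begin{pmatrix} * & \lambda^{-1}\,* \\ \lambda\,\Gamma & * \end{pmatrix}.
\]
Using the local boundedness (or continuity) of $\varphi$ near $A \oplus C$, the left-hand side stays bounded in norm as $\lambda \to \infty$, so the $(2,1)$-block $\lambda\,\Gamma$ is bounded in $\lambda$---which forces $\Gamma = 0$, as required.

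I expect the main obstacle to be ensuring the right regularity of $\varphi$ at the limit point $A \oplus C$: one needs at least local boundedness of $\varphi$ along the scaling ray $\{s_\lambda^{-1} Z s_\lambda : \lambda \geq 1\}$. If only a WOT-continuity of $\varphi$ is available from the operator NC definition given in Section~2, the same conclusion $\Gamma = 0$ still holds, since the WOT-convergence of $\lambda\,\Gamma$ forces $\langle \Gamma x, y\rangle = 0$ for all $x, y \in \cH$. A secondary bookkeeping point is that $\cN$ or $\cN^\perp$ could be finite-dimensional, but this causes no issue since every construction takes place inside the fixed ambient $\B(\cH)$.
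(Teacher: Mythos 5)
Your proof is correct and follows essentially the same route as the paper's Theorem~3.2--Corollary~3.3--Theorem~3.1 chain: both pass from a separating WOT-continuous functional $T\mapsto\sum_j\langle Tu_j,v_j\rangle$ to the $\alg(Z)^{(n)}$-invariant subspace $\overline{\alg(Z)^{(n)}u}\subset\cH^{(n)}$, then apply the anisotropic scaling similarity $\lambda P_{\mathcal N}+P_{\mathcal N^\perp}$ and invoke the automatic boundedness of operator NC functions on the terms of the exhausting sequence. The only presentational difference is that the paper fixes one sufficiently extreme scaling ratio $\alpha/\beta$ and derives a direct norm contradiction ($\Vert\varphi(s^{-1}Z^{(n)}s)\Vert>1$ against $\Vert\varphi\Vert\leq 1$ on $B_\delta(\B(\cH))^{k+1}$), whereas you let $\lambda\to\infty$ and deduce that the off-diagonal corner $\Gamma$ of $\varphi(Z)$ must vanish---mathematically the same mechanism.
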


Other versions of statements along these lines have been established by Agler and McCarthy in \cite{amop, global2} and recently by Augat and McCarthy in \cite{opNC}. In \cite{global2}, the authors proved a {\it matricial} version of algebraicity of noncommutative functions of polynomial polyhedra. Variants of the operatorial case are considered in both \cite{amop, opNC}. Crucially, in both of these treatments of the operatorial case, the authors rely heavily on the assumption that the polynomial polyhedron $B_{\delta}(\B(\cH))$ is a {\it balanced set}. Recall that a subset $S$ of a vector space is said to be balanced if $\alpha S\subset S$ for every scalar $\alpha$ with $\vert \alpha \vert \leq 1$. That is, all previous results on algebraicity of operator NC functions on polynomial polyhedra known to the author require that the domain is balanced. Of course, any balanced set must in particular contain 0. The novelty in the present paper is that we are able to dispense of such an assumption. Indeed, weak algebraicity is established for any $B_{\delta}(\B(\cH))$, even for those that contain no matrix points. An example of a $B_{\delta}(\B(\cH))$ containing no matrix points is given in Section 2.

\section{Preliminaries}

This section contains the precise notions of operator NC domain and function as well as a discussion of known results on algebraicity of noncommutative functions. In what follows, we fix $\cH$, a separable infinite-dimensional Hilbert space over the complex numbers. Definitions 2.1 and 2.2 were first proposed in \cite{inverse} in the context of operatorial noncommutative inverse and implicit function theorems, and we refer the reader to that article for more comments on these definitions.

 \begin{definition} 
  A subset $\Sigma\subset \B(\cH)^d$ is called an {\bf operator NC domain} if there exists an exhausting sequence $(\Sigma_k)_{k=1}^{\infty}$ of subsets of $\Sigma$ with the following properties:

\begin{enumerate}
  \item  $\Sigma=\bigcup_{k=1}^{\infty} \Sigma_k$ and $\Sigma_k \subset$  int $\Sigma_{k+1}$ for all $k\geq 1$.

  \item Every term $\Sigma_k$ is bounded and closed under unitary conjugation.

  \item Every term $\Sigma_k$ is closed under countable direct sums in the sense that if $X_n$ is a sequence in $\Sigma_k$ of length $l\in \mathbb{N}\cup \{\infty\}$, then there exists a unitary operator $U:\cH\rightarrow \cH^{(l)}$ such that \begin{align} U^{-1}\begin{bmatrix}
    X_{1} & &  \\
    & X_{2} & \\
    & & \ddots  
  \end{bmatrix}U \in \Sigma_k.\end{align}
\end{enumerate}
  \end{definition}

This definition of operator NC domain is quite general, but in this paper, we are primarily concerned with the special case of polynomial polyhedra. Let $\delta$ be an $I\times J$  matrix of noncommutative polynomials in $d$ variables, and recall that its associated 
operatorial polynomial polyhedron is  
\[
	B_{\delta}(\B(\cH))=\{X\in \B(\cH)^d : \norm{\delta(X)}<1\}.
\]
 Crucially, every operatorial polynomial polyhedron is an example of an operator NC domain. Indeed, one may choose the exhausting sequence $(B_{\delta}(\B(\cH))^k)_k$ where \begin{equation}
B_{\delta}(\B(\cH))^k=\{X\in \B(\cH)^d : \norm{\delta(X)}\leq 1-1/k \,\, \text{and}\,\, \norm{X}\leq k\}.\end{equation} We use the max norm on $\B(\cH)^d$ given by $\Vert X\Vert := \max_{1\leq i \leq d} \Vert X^i \Vert.$ The reader is invited to check that the exhausting sequence in (2.2) satisfies the requirements in Definition 2.1.

  We now turn our attention to {\it functions} that are defined on operator NC domains $\Sigma \subset \B(\cH)^d$. 

  \begin{definition} 
  Suppose $\Sigma \subset \B(\cH)^d$ is an operator NC domain. A function $f:\Sigma \rightarrow \B(\cH)$ is said to be an {\bf operator NC function} if whenever $X,Y \in \Sigma$ and  $s:\cH\rightarrow \cH^{(2)}$ is bounded and invertible such that $$s^{-1}\begin{bmatrix}
    X & 0 \\
   0 & Y 
  \end{bmatrix}s \in \Sigma,$$ then $$f\left(s^{-1}\begin{bmatrix}
    X & 0  \\
   0 & Y   
  \end{bmatrix}s\right)=s^{-1}\begin{bmatrix}
    f(X) & 0  \\
   0 & f(Y)  
  \end{bmatrix}s.$$
  \end{definition}

It was shown in \cite{inverse} that operator NC functions (when defined on operator NC domains as in Definition 2.1) automatically preserve {\it countable} direct sums and intertwinings. Moreover, they are also always {\it locally bounded} on operator NC domains. Indeed, they must in fact be bounded on each term $\Sigma_k$ of an exhausting sequence $(\Sigma_k)_{k=1}^{\infty}$ of the operator NC domain. This is a particularly nice feature of working with domains that have exhaustions that satisfy (2.1). It alleviates the need to assume {\it a priori} that the noncommutative function with which one is working is locally bounded. Such an assumption is pervasive throughout the noncommutative function theory literature.

We conclude this section by collecting some known results on the algebraicity of noncommutative functions in various settings and contrasting them with the main result of this paper. In the {\it matricial} setting, the problem of pointwise algebraicity was settled by Agler and McCarthy:

\begin{theorem}[\cite{global2}, Theorem 2.4]
If $\vf \in H^{\infty}(B_{\delta})$, then $\vf(z) \in \alg(z)$ for each $z \in B_{\delta}$.
\end{theorem}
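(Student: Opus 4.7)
The plan is first to reduce Theorem 1.3 to a single-vector approximation statement. Because the unital subalgebra $\alg(Z)$ is convex, its WOT and SOT closures in $\B(\cH)$ coincide by Hahn--Banach, so it suffices to find, for any finite family $\xi_1,\ldots,\xi_n \in \cH$ and any $\ep > 0$, an element $A \in \alg(Z)$ with $\norm{(\vf(Z)-A)\xi_i} < \ep$ for every $i$. Using that $\vf$ preserves countable direct sums (so $\vf(Z^{(n)}) = \vf(Z)^{(n)}$) and that $\alg(Z^{(n)}) = \{A^{(n)} : A \in \alg(Z)\}$, I can collapse the $n$ test conditions into a single vector $\xi = (\xi_1,\ldots,\xi_n) \in \cH^{(n)}$ and a single tuple $Z^{(n)} \in B_\delta(\B(\cH^{(n)}))$. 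Thus it suffices to show: for every $Z \in B_\delta(\B(\cH))$ and every $\xi \in \cH$, $\vf(Z)\xi$ lies in the norm closure of $\alg(Z)\xi$.

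Setting $\cK = \overline{\alg(Z)\xi}$, a closed $Z$-invariant subspace of $\cH$ containing $\xi$, the single-vector claim becomes $\vf(Z)\xi \in \cK$. I would prove the stronger key lemma that every closed $Z$-invariant subspace $\cK \subset \cH$ is also $\vf(Z)$-invariant.

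To establish the key lemma I would transfer to the direct sum $Z \oplus Z \in B_\delta(\B(\cH^{(2)}))$ and consider similarities $s_N = \bpm I & N \\ 0 & I \epm$ for bounded $N : \cH \to \cH$. A direct computation gives $s_N^{-1}(Z \oplus Z) s_N = \bpm Z & [Z,N] \\ 0 & Z \epm$, and $\norm{\delta(s_N^{-1}(Z\oplus Z) s_N)} \le \norm{s_N} \norm{s_N^{-1}} \norm{\delta(Z)}$, which is less than $1$ once $\norm{N}$ is small. Similarity preservation of $\vf$ combined with the NC upper-triangular formula for tuples whose diagonal blocks $Z$ both lie in $B_\delta(\B(\cH))$ yields the operator identity $[\vf(Z), N] = \Delta\vf(Z)\bigl[[Z,N]\bigr]$ for small $N$, where $\Delta\vf(Z)$ is the NC difference-differential operator at $Z$; linearity of both sides extends the identity to all bounded $N$. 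Taking $N = P_\cK$ and invoking $Z$-invariance of $\cK$, one has $[Z, P_\cK] = -P_\cK Z P_\cK^\perp$, which is purely off-diagonal in the $\cK \oplus \cK^\perp$ decomposition, so the invariance $(I - P_\cK)\vf(Z) P_\cK = 0$ reduces to the off-diagonal vanishing $P_\cK^\perp \Delta\vf(Z)\bigl[P_\cK Z P_\cK^\perp\bigr] = 0$.

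The main obstacle will be this final off-diagonal vanishing. For free polynomial $\vf$ it is immediate from direct substitution into words, and in the balanced case treated in \cite{amop,opNC} it follows from a power-series expansion of $\vf$ at $0$; in the unbalanced case neither option is available, since $0$ need not lie in $B_\delta(\B(\cH))$ and $B_\delta(\B(\cH))$ may contain no matrix points at all. My intended workaround is to iterate the similarity construction: interpret $\Delta\vf(Z)[W]$ as the $(1,2)$-block of $\vf$ applied to $\bpm Z & W \\ 0 & Z \epm \in B_\delta(\B(\cH^{(2)}))$, note that $\cK \oplus \cK$ is invariant under this upper-triangular tuple whenever $W$ is strictly upper in the $\cK \oplus \cK^\perp$ decomposition, and bootstrap via the matricial Agler--McCarthy result (Theorem 2.3) applied only to polynomial restrictions, combined with the local boundedness of $\vf$ on the exhausting sequence $(B_\delta(\B(\cH))^k)_k$ of Section 2, to produce the vanishing identity. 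Throughout, the argument only ever works with direct sums of $Z$, which are guaranteed to lie in the polyhedron, rather than with compressions of $Z$ to $\cK$ or $\cK^\perp$, which need not.
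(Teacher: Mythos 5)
Your proposal targets the paper's operatorial main theorem (Theorem 1.3, proved as Theorem 3.1), not the quoted matricial result Theorem 2.3, which the paper cites from Agler--McCarthy without proof; I will therefore compare it against the paper's proof of Theorem 3.1. Your opening reduction is sound: Hahn--Banach equates the WOT- and SOT-closures of the convex set $\alg(Z)$, ampliation converts finite-vector SOT-approximation to the single-vector case, and that single-vector claim would follow from the ``key lemma'' that closed $Z$-invariant subspaces are $\vf(Z)$-invariant. Up to here you and the paper run parallel: Theorem 3.2 likewise passes, via a separating WOT-continuous functional $\ell(T)=\sum_j\ip{Tu_j}{v_j}$, to the ampliation $Z^{(n)}$ and the invariant subspace $\cM=\overline{\alg(Z)^{(n)}u}$. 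The two arguments then diverge fundamentally. The paper never proves your key lemma directly; it assumes $W=\vf(Z)\notin\overline{\alg(Z)}^{{}_{WOT}}$, observes that $\cM$ is then \emph{not} $W^{(n)}$-invariant, conjugates by the scaling similarity $\sigma=\alpha 1_{\cM}+\beta 1_{\cM^\perp}$, and chooses $\alpha/\beta$ large enough that $\sigma^{-1}Z^{(n)}\sigma$ remains in a fixed term $B_\delta(\B(\cH))^{k+1}$ of the exhaustion while $\norm{\sigma^{-1}W^{(n)}\sigma}>1$, contradicting the automatic boundedness of $\vf$ on that term. No difference-differential machinery is used.

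Your route, by contrast, hinges on the off-diagonal vanishing $P_\cK^\perp\Delta\vf(Z)[P_\cK Z P_\cK^\perp]P_\cK=0$, and this is where the proposal breaks. The ``bootstrap'' you sketch is circular: you wish to conclude that $\cK\oplus\cK$ is $\vf(T)$-invariant for $T=\bpm Z & W\\0 & Z\epm$, but since $\vf(T)=\bpm \vf(Z) & \Delta\vf(Z)[W]\\0 & \vf(Z)\epm$, the $(2,2)$-block condition of that claimed invariance is precisely the statement that $\cK$ is $\vf(Z)$-invariant --- the key lemma you set out to prove. The step does not reduce to a previously settled case; it merely restates the problem one ampliation higher, with no base case in sight. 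The appeal to the matricial Theorem 2.3 cannot close the loop either, for the very reason you flag yourself: Example 2.5 of the paper exhibits a non-empty $B_\delta(\B(\cH))$ containing no matrix points whatsoever, so there is no matricial restriction of $\vf$ to invoke. It is exactly this failure of the power-series/difference-differential route in the unbalanced case that drives the paper to the separation-plus-scaling argument, which relies only on the built-in boundedness of operator NC functions on the exhausting sets $(B_\delta(\B(\cH))^k)_k$ and not on any derivative calculus.
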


By $H^{\infty}(B_{\delta})$ the authors mean the space of bounded matricial noncommutative functions on $B_{\delta}$. In the case of operator NC functions, one consequence of Theorem 3.1 is that one need not  assume boundedness of the function to conclude weak algebraicity. 

Agler and McCarthy also considered algebraicity questions on polynomial polyhedra in the context of {\it operatorial} noncommutative functions in \cite{amop}. In that paper, the authors relied on the additional assumption that the operator NC function was sequentially continuous in the strong operator topology (SOT). Structure in the SOT in the context of operatorial noncommutative analysis has also been studied in \cite{JKMMP, inverse}. The main result in \cite{amop} on algebraicity was recently refined by Augat and McCarthy in \cite{opNC} who were able to omit the sequential SOT-continuity hypothesis. In our notation, their result is as follows:

\begin{theorem}[\cite{opNC}, Corollary 4.4]
Suppose $F: B_{\delta}(\B(\cH)) \rightarrow \B(\cH)$ is an operator NC function on a balanced domain $B_{\delta}(\B(\cH))$. Then $F(X)$ is in the norm-closed unital algebra $\alg(X)$.
\end{theorem}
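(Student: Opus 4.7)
The plan is to exploit balancedness of $B_\delta(\B(\cH))$ by expanding $F$ as a noncommutative power series at the origin and showing it converges in norm at every point of the domain. Balancedness gives $0\in B_\delta(\B(\cH))$ and $zX\in B_\delta(\B(\cH))$ for all $|z|\le 1$ whenever $X\in B_\delta(\B(\cH))$, so the slice function $g(z):=F(zX)$ is defined on the closed unit disk and satisfies $g(1)=F(X)$; the task reduces to identifying $g(1)$ as a norm limit of polynomials in the coordinates of $X$.

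The first step is to write down the expansion. By the noncommutative Taylor--Taylor theorem, $F$ admits a formal expansion
\[
F(Y)\,\sim\,\sum_{k=0}^\infty F_k(Y),
\]
where each $F_k$ is a homogeneous free polynomial of degree $k$ in the coordinates of $Y$ with complex scalar coefficients. Substituting $Y=zX$ yields the $\B(\cH)$-valued one-variable power series $g(z)=\sum_{k=0}^\infty z^k F_k(X)$, and every partial sum $\sum_{k=0}^N F_k(X)$ lies in $\alg(X)$.

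The second and principal step is to upgrade this formal equality to genuine norm convergence at $z=1$. Openness of $B_\delta(\B(\cH))$ together with balancedness produces a radius $r>1$ such that $zX\in B_\delta(\B(\cH))$ for all $|z|\le r$: the continuous function $z\mapsto\norm{\delta(zX)}$ is strictly less than $1$ on the compact closed unit disk and hence remains below $1$ on a slightly larger closed disk by a compactness argument. Since operator NC functions on operator NC domains are norm-analytic (local boundedness on each $\Sigma_k$ implies complex differentiability, by the results cited in Section~2), $g$ is a $\B(\cH)$-holomorphic function on $\{|z|<r\}$. Standard Cauchy estimates then yield $\norm{F_k(X)}\le r^{-k}\sup_{|\zeta|=r}\norm{g(\zeta)}$, and the supremum is finite because $\{\zeta X:|\zeta|\le r\}$ is a compact subset of $B_\delta(\B(\cH))$ and hence is contained in some $\Sigma_k$ on which $F$ is bounded. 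Thus $\sum_k\norm{F_k(X)}$ converges geometrically, so $\sum_k F_k(X)$ converges in norm to $g(1)=F(X)$, realizing $F(X)$ as a norm limit of polynomials in $\alg(X)$.

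The chief technical obstacle I anticipate is verifying that the Taylor coefficients $F_k$ really are free polynomials with \emph{scalar} rather than operator-valued coefficients, so that $F_k(X)$ lies in $\alg(X)$. This is a standard fact from noncommutative function theory: the iterated noncommutative difference-differentials $\Delta^k F(0)$ admit canonical scalar-coefficient polynomial realizations, extracted by evaluating $F$ on nilpotent block-upper-triangular perturbations of the origin. Once this identification is in hand, the remaining quantitative work is just the Cauchy-estimate bookkeeping outlined above.
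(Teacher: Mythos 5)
The paper does not prove this statement; Theorem~2.4 is quoted as a known result (Augat--McCarthy, \cite{opNC}, Corollary~4.4), so there is no in-paper proof to compare against. What I can do is assess your sketch on its own and contrast it with the strategy this paper actually develops for its new result.

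Your outline follows the standard line of attack in the cited literature, and the reductions you make are sound: balancedness puts $0$ in the domain and gives the slices $z\mapsto zX$; openness plus compactness of $\{|z|\le 1\}$ gives a radius $r>1$; boundedness of $F$ on each term of the exhausting sequence gives a finite Cauchy bound; geometric decay of $\norm{F_k(X)}$ then gives norm convergence of the partial sums to $F(X)$. You have also correctly located the real issue: why is $F_k(X)$ in $\alg(X)$ at all, i.e.\ why does the $k$-th Cauchy coefficient equal $p_k(X)$ for a \emph{fixed} free polynomial $p_k$ with \emph{scalar} coefficients? This is not a step you can dismiss as ``standard.'' In the matricial setting the Taylor--Taylor theorem with scalar coefficients is established in Kaliuzhnyi-Verbovetskyi--Vinnikov, but in the operatorial setting one has to re-derive it, and doing so in a way that uses only $0$ and nilpotent perturbations living inside $B_\delta(\B(\cH))$ is precisely the substance of \cite{opNC}. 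You should at minimum note (i) that balancedness and openness guarantee the matrix tuples $\epsilon N\otimes a$ you want to evaluate on are actually in the domain once scaled, because $\norm{\delta(0)}<1$, and (ii) give the multilinearization/equivariance argument that forces scalar coefficients; as written, the crux of the theorem is asserted rather than proved.

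Finally, be aware that this Taylor-series approach is intrinsically tied to balancedness: without $0$ in the domain there is no basepoint at which to expand, and indeed Example~2.5 of the paper exhibits a $B_\delta(\B(\cH))$ with no matrix points at all. That is why the paper's own Theorem~3.1 abandons this route entirely and instead runs a Hahn--Banach / invariant-subspace / similarity-orbit argument, at the cost of obtaining only WOT-closure of $\alg(Z)$ rather than norm-closure. Your argument, if the scalar-coefficient step is filled in, proves the stronger (norm-closed) conclusion but only under the stronger (balanced) hypothesis; the two results and their proofs are genuinely incomparable.
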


As mentioned in Section 1, Theorem 2.4 requires the operatorial polynomial polyhedron to be a balanced set. In particular, it must contain the matrix point 0. 
In turn, with this extra hypothesis, they are able to conclude that the function values lie in the norm-closed algebra. Not all $B_{\delta}(\B(\cH))$ are balanced, however:

\begin{example}
Let $\delta \in M_{1\times 1}(\mathbb{C}\langle x,y\rangle)$ be given by $$\delta(x,y)=1-[x,y]=1-(xy-yx).$$ Then the associated operatorial $B_{\delta}(\B(\cH))$ is not balanced and non-empty. In fact, it does not contain a single matrix point. In other words, the associated {\it matricial} $B_{\delta}$ is empty.
\end{example} 

Thus, in particular, Theorem 2.4 does not apply to operator NC functions defined on the $B_{\delta}(\B(\cH))$ appearing in Example 2.5. On the other hand, Theorem 1.3 does apply in this case.

\section{Main Results}

In this section, we state and prove the main results of this paper. Recall that $\alg(Z)$ denotes the unital subalgebra 
of $\B(\cH)$ generated by the coordinates of $Z\in \B(\cH)^d$.  

\begin{theorem}
	If $\vf :B_{\delta}(\B(\cH)) \rightarrow \B(\cH)$ is an operator NC function, then $$\vf(Z)\in \overline{\alg(Z)}^{{}_{WOT}}$$ for every $Z\in B_{\delta}(\B(\cH))$.
\end{theorem}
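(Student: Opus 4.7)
The plan is to combine a reflexivity-type criterion for WOT-closures with the intertwining-preservation property of operator NC functions, which is noted in the preliminaries as established in \cite{inverse}. Specifically, for any unital subalgebra $\cA \subset \B(\cH)$, an operator $T \in \B(\cH)$ lies in $\overline{\cA}^{{}_{WOT}}$ if and only if, for every $n \geq 1$ and every closed $\cA^{(n)}$-invariant subspace $M \subset \cH^{(n)}$, one has $T^{(n)} M \subset M$. This is essentially Hahn-Banach: every WOT-continuous linear functional on $\B(\cH)$ has the form $A \mapsto \langle A^{(n)} \xi, \eta \rangle$ for some $n$ and $\xi, \eta \in \cH^{(n)}$, and its vanishing on $\cA$ forces $\eta \perp \overline{\cA^{(n)} \xi}$, so invariance of that cyclic subspace under $T^{(n)}$ forces the functional to vanish on $T$ as well. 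Moreover, a direct-sum argument lets me restrict attention to infinite-dimensional invariant subspaces: given finite-dimensional $Z^{(n)}$-invariant $M \subset \cH^{(n)}$, the subspace $M \oplus \cH \subset \cH^{(n+1)}$ is $Z^{(n+1)}$-invariant and infinite-dimensional, and $\vf(Z)^{(n+1)}$-invariance of $M \oplus \cH$ immediately yields $\vf(Z)^{(n)}$-invariance of $M$.

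With this reduction in hand, I would carry out the main construction. Fix $n$ and an infinite-dimensional closed $Z^{(n)}$-invariant subspace $M \subset \cH^{(n)}$, and choose an isometry $L \colon \cH \to \cH^{(n)}$ with $L(\cH) = M$. The invariance $Z^{(n)} M \subset M = L(\cH)$ gives $LL^* Z^{(n)} L = Z^{(n)} L$, so setting $W := L^* Z^{(n)} L \in \B(\cH)^d$ yields the coordinate intertwining $Z^{(n)} L = LW$. Iterating produces $p(Z^{(n)}) L = L\, p(W)$ for every noncommutative polynomial $p$, and promoting to matrices of polynomials gives $\delta(Z^{(n)}) L^{(J)} = L^{(I)} \delta(W)$. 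Since $L^{(I)}$ is an isometry,
\[
    \|\delta(W)\| = \|L^{(I)} \delta(W)\| = \|\delta(Z^{(n)}) L^{(J)}\| \leq \|\delta(Z^{(n)})\| = \|\delta(Z)\| < 1,
\]
so $W \in B_\delta(\B(\cH))$. I would then transfer to the domain of $\vf$ by picking a unitary $U \colon \cH \to \cH^{(n)}$ and setting $\tilde Z := U^{-1} Z^{(n)} U$ and $\tilde L := U^{-1} L$, which gives $\tilde Z \tilde L = \tilde L W$ with $\tilde Z, W \in B_\delta(\B(\cH))$. The direct-sum and similarity behavior of $\vf$ identifies $\vf(\tilde Z)$ with $U^{-1} \vf(Z)^{(n)} U$, and invoking the intertwining-preservation property of operator NC functions from \cite{inverse} yields $\vf(\tilde Z) \tilde L = \tilde L \vf(W)$. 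Unwinding the identifications gives $\vf(Z)^{(n)} L = L\, \vf(W)$, whence $\vf(Z)^{(n)} M = \vf(Z)^{(n)} L(\cH) = L\, \vf(W)(\cH) \subset L(\cH) = M$, verifying the criterion.

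The main technical step I anticipate is confirming that the constructed $W$ actually lies in the \emph{same} operatorial polynomial polyhedron $B_\delta(\B(\cH))$. The key observation is that the coordinate intertwining $Z^{(n)} L = LW$ propagates automatically all the way up to the $I \times J$ matrix polynomial $\delta$, producing $\delta(Z^{(n)}) L^{(J)} = L^{(I)} \delta(W)$, from which the norm bound $\|\delta(W)\| \leq \|\delta(Z)\| < 1$ is immediate because $L^{(I)}$ is an isometry. Once this is in place the rest is structural bookkeeping combining the standard WOT reflexivity criterion with the intertwining and direct-sum properties of operator NC functions; in particular, no balancedness assumption on $B_\delta(\B(\cH))$ is needed, and no extra continuity hypothesis on $\vf$ either, since local boundedness is automatic in the operator NC framework.
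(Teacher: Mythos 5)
Your proof is correct, and it takes a genuinely different route from the paper's. Both arguments begin the same way---Hahn--Banach plus the finite-rank form of WOT-continuous functionals produces, for each separating functional, an $n$ and a closed $\alg(Z)^{(n)}$-invariant subspace of $\cH^{(n)}$---but they diverge sharply on what to do with that invariant subspace. The paper runs a proof by contradiction: starting from the assumption that $W=\vf(Z)$ is \emph{not} in $\overline{\alg(Z)}^{{}_{WOT}}$, it takes the block-triangular decompositions of $Z^{(n)}$ and $W^{(n)}$ relative to $\cM \oplus \cM^{\perp}$ and conjugates by the tilted operator $\sigma=\alpha 1_{\cM}+\beta 1_{\cM^{\perp}}$ with $\alpha>\beta$, which shrinks the strictly-upper block of $Z^{(n)}$ (keeping the conjugate inside the exhausting set $B_{\delta}(\B(\cH))^{k+1}$) while blowing up the nonzero strictly-lower block of $W^{(n)}$; this is then played off against the automatic local boundedness of operator NC functions on the exhaustion. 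Your argument is direct rather than by contradiction: it compresses $Z^{(n)}$ to $M$ via an isometry $L$ to get $W:=L^*Z^{(n)}L$, propagates the intertwining $Z^{(n)}L=LW$ through $\delta$ to verify $W\in B_\delta(\B(\cH))$, and then invokes the intertwining-preservation property of operator NC functions (established in \cite{inverse} and quoted in the paper's Section 2) to get $\vf(Z)^{(n)}L=L\vf(W)$, hence $\vf(Z)^{(n)}M\subset M$. The routine reductions you flag (passing to an infinite-dimensional invariant subspace via $M\oplus\cH\subset\cH^{(n+1)}$, and transporting $Z^{(n)}$ and $L$ to $\B(\cH)$ by a fixed unitary $U:\cH\to\cH^{(n)}$) are exactly the right ones and cost nothing. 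In short, the paper leans on local boundedness and a norm blow-up, while you lean on intertwining preservation and a norm contraction; both ingredients are automatic for operator NC functions, so the two proofs have comparable hypotheses, and yours arguably has a cleaner logical structure at the price of invoking a slightly heavier-duty black box (intertwining preservation rather than just the direct-sum/similarity axiom and boundedness).
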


We first prove a separation-type theorem for the WOT-closure of the algebra generated by the coordinates of $Z$ that may be seen as a stepping-stone to Theorem 3.1, but it is also interesting in its own right. We write $Z^{(n)}$ to denote the direct sum of $n$ copies of the operator tuple $Z.$ Theorem 3.2 below is essentially an application of the Hahn-Banach theorem in the context of the locally convex topological vector space $\B(\cH)$ endowed with the weak operator topology. The method of proof of Theorem 3.2 can be seen as a WOT version of some ideas used in \cite{global2}. Recall that we use the max norm for the norm of an operator tuple.

\begin{theorem}
	If $Z\in \B_1(\cH)^d$ and $W\notin \overline{\alg(Z)}^{{}_{WOT}}$  there exists a positive integer $n$ and invertible $s:\cH\rightarrow 
	\cH^{(n)}$ such that $\Vert s^{-1}Z^{(n)}s\Vert<1$ and $\Vert s^{-1}W^{(n)}s\Vert>1$.
\end{theorem}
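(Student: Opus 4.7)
The plan is to apply the Hahn--Banach separation theorem in the locally convex space $(\B(\cH),\mathrm{WOT})$. Since $\overline{\alg(Z)}^{{}_{WOT}}$ is a WOT-closed linear subspace of $\B(\cH)$ that does not contain $W$, there is a WOT-continuous linear functional $\phi$ on $\B(\cH)$ with $\phi$ vanishing on $\overline{\alg(Z)}^{{}_{WOT}}$ and $\phi(W)\neq 0$. By the standard description of the WOT-dual,
\[
	\phi(A)=\sum_{i=1}^n \langle A\xi_i,\eta_i\rangle
\]
for some vectors $\xi_1,\dots,\xi_n,\eta_1,\dots,\eta_n\in\cH$; this $n$ is the integer demanded by the theorem.

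Assembling $\xi=(\xi_i)_{i=1}^n$ and $\eta=(\eta_i)_{i=1}^n$ in $\cH^{(n)}$ yields $\phi(A)=\langle A^{(n)}\xi,\eta\rangle$ for every $A\in\B(\cH)$. I would then consider the subspace $\cM=\overline{\alg(Z)^{(n)}\xi}\subset\cH^{(n)}$. Three observations follow from the setup: (i) $\cM$ is invariant under each $(Z^i)^{(n)}$ because $\alg(Z)$ absorbs left multiplication by $Z^i$; (ii) $\xi\in\cM$ because $\alg(Z)$ is unital; (iii) $\eta\perp\cM$ because $\phi$ vanishes on $\alg(Z)$. Letting $P$ denote the orthogonal projection onto $\cM$, the identity
\[
	0\neq\phi(W)=\langle W^{(n)}\xi,\eta\rangle=\langle (I-P)W^{(n)}P\,\xi,\eta\rangle
\]
shows the ``lower-triangular'' block $(I-P)W^{(n)}P$ of $W^{(n)}$ relative to $\cH^{(n)}=\cM\oplus\cM^{\perp}$ is nonzero; call its norm $c>0$.

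The construction of $s$ proceeds via the one-parameter family of invertible operators $T_{\varepsilon}=P+\varepsilon(I-P)$ on $\cH^{(n)}$, with inverse $T_{\varepsilon}^{-1}=P+\varepsilon^{-1}(I-P)$. Invariance of $\cM$ under each $(Z^i)^{(n)}$ kills the lower-triangular block of $T_{\varepsilon}^{-1}(Z^i)^{(n)}T_{\varepsilon}$ and merely rescales its upper-triangular block by $\varepsilon$, so a triangle-inequality estimate combined with $\|Z^i\|<1$ gives $\|T_{\varepsilon}^{-1}(Z^i)^{(n)} T_{\varepsilon}\|<1$ for every $i$ once $\varepsilon$ is sufficiently small. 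Conversely, $T_{\varepsilon}^{-1}W^{(n)}T_{\varepsilon}$ has its lower-triangular block rescaled to $\varepsilon^{-1}(I-P)W^{(n)}P$; since the operator norm of a $2\times2$ block matrix dominates the norm of each of its blocks, $\|T_{\varepsilon}^{-1}W^{(n)}T_{\varepsilon}\|\geq\varepsilon^{-1}c$, which exceeds $1$ for small $\varepsilon$. Fixing such an $\varepsilon$ and composing with any unitary $U:\cH\to\cH^{(n)}$ produces the required invertible $s=T_{\varepsilon}U$.

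The principal conceptual obstacle is the opening step: recognizing that WOT-continuity of the separating functional automatically packages both the integer $n$ and, after reassembly into $\xi,\eta\in\cH^{(n)}$, a concrete invariant subspace $\cM$ whose off-diagonal interaction with $W^{(n)}$ encodes the Hahn--Banach separation. Once this correspondence is in place, $T_{\varepsilon}$ is a standard device for simultaneously shrinking the triangular part of one operator tuple while inflating the triangular part of another, and the remaining norm estimates are routine.
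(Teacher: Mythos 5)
Your proposal is correct and follows essentially the same route as the paper: Hahn--Banach separation in the WOT, a WOT-continuous functional represented by finitely many rank-one terms, the invariant subspace $\cM=\overline{\alg(Z)^{(n)}\xi}$, and a block-diagonal similarity (your $T_{\varepsilon}=P+\varepsilon(I-P)$ is the paper's $\sigma=\alpha 1_{\cM}+\beta 1_{\cM^{\perp}}$ with $\alpha=1$, $\beta=\varepsilon$) that shrinks the upper triangle of $Z^{(n)}$ while inflating the lower triangle of $W^{(n)}$. The only cosmetic difference is that you make explicit, via $\eta\perp\cM$ and $0\neq\phi(W)=\langle(I-P)W^{(n)}P\xi,\eta\rangle$, why the lower-left block of $W^{(n)}$ is nonzero, a point the paper simply asserts.
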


\begin{proof}
	By the Hahn-Banach theorem, there is a WOT-continuous linear functional $\ell: \B(\cH)\rightarrow \C$ such that 
	$\ell(\overline{\alg(Z)}^{{}_{WOT}})=0$ and $\ell(W)\neq0.$ There is a positive integer $n$ and vectors $u_1,\ldots, u_n$ and $v_1,\ldots, 
	v_n$ in $\cH$ such that
	\[
		\ell(T)=\sum_{j=1}^n\langle Tu_j,v_j\rangle
	\]
	for $T\in \B(\cH)$. (See Proposition 5.1 in \cite{conway} for a reference on the characterization of WOT-continuous linear functionals on $\B(\cH)$.)
	
	Let $\alg(Z)^{(n)}$ denote the set of $n$-ampliations of elements of $\alg(Z)$. That is, let $\alg(Z)^{(n)}:= \{S^{(n)}:S\in \alg(Z)\}$. Now define a closed subspace $\mathcal M$ of $\cH^{(n)}$ by
\[
	 \mathcal M:=\overline{\alg(Z)^{(n)}u},
\] where $u$ is the column vector $(u_1 \cdots u_n)^T$ in  $\cH^{(n)}$.
 By construction, the subspace $\mathcal M$ is invariant for every operator in $\alg(Z)^{(n)}$, but it is not invariant for the operator $W^{(n)}.$ 

Consider the orthogonal decomposition $\mathcal M\oplus \mathcal M^{\perp}$ of $\cH^{(n)}$. With respect to this decomposition, define $\sigma:\cH^{(n)}\rightarrow \cH^{(n)}$ by $\sigma:=\alpha 1_{\mathcal M} + \beta 1_{\mathcal M^{\perp}}$, where $\alpha>\beta$ are positive numbers to be specified. Since $\mathcal M$ is invariant for each $Z_i^{(n)}$ but not $W^{(n)},$ with respect to the above orthogonal decomposition, we may write
\[
Z_i^{(n)}=\bbm
		A_i & B_i  \\
		0 & D_i
	\ebm
\]  and
\[
W^{(n)}=\bbm
		\tilde{A} & \tilde{B}  \\
		\tilde{C} & \tilde{D}
	\ebm,
\] where $\tilde{C}\neq 0$.
Therefore, since a routine computation shows that
\[
\sigma^{-1} Z_i^{(n)}\sigma=\bbm
		A_i & \frac{\beta}{\alpha}B_i  \\
		0 & D_i
	\ebm
\] and 
\[
\sigma^{-1} W^{(n)}\sigma=\bbm
		\tilde{A} & \frac{\beta}{\alpha}\tilde{B}  \\
		\frac{\alpha}{\beta}\tilde{C} & \tilde{D}
	\ebm,
\] we can choose $\alpha>\beta$ so that $\Vert \sigma^{-1}Z^{(n)}\sigma\Vert<1$ and $\Vert \sigma^{-1}W^{(n)}\sigma\Vert>1$. The result follows by letting $s:\cH\rightarrow \cH^{(n)}$ be defined by $s=\sigma U$ for a unitary $U:\cH\rightarrow\cH^{(n)}.$
\end{proof}

By modifying the proof of Theorem 3.2 to apply to an operator tuple $Z$ in a general $B_{\delta}(\B(\cH))$, rather than the particular case of the unit polydisk $\B_1(\cH)^d$, we can prove the following corollary. Corollary 3.3 will imply our main result on weak algebraicity. In the statement of Corollary 3.3, note the reference to a particular term of the exhausting sequence given in (2.2). We need this extra detail in the statement of the result in order to make use of the fact that operator NC functions on $B_{\delta}(\B(\cH))$ are bounded on each term $B_{\delta}(\B(\cH))^k$.

\begin{corollary}
	If $Z \in B_{\delta}(\B(\cH))^k$ and $W\notin \overline{\alg(Z)}^{{}_{WOT}}$ there exists a positive integer $n$ and invertible $s:\cH\rightarrow \cH^{(n)}$ 
	so that $s^{-1}Z^{(n)}s\in B_{\delta}(\B(\cH))^{k+1}$ and $\Vert s^{-1}W^{(n)}s\Vert>1$.
\end{corollary}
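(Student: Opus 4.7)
The plan is to run the proof of Theorem 3.2 with essentially the same invariant-subspace construction, but to track both $\|\sigma^{-1} Z^{(n)} \sigma\|$ and $\|\delta(\sigma^{-1} Z^{(n)} \sigma)\|$ rather than just the polydisk norm. First I would apply Hahn-Banach exactly as in Theorem 3.2 to produce a WOT-continuous $\ell$ on $\B(\cH)$ with $\ell|_{\alg(Z)} = 0$ and $\ell(W) \neq 0$, write $\ell(T) = \sum_{j=1}^n \langle Tu_j, v_j \rangle$, form $\mathcal M := \overline{\alg(Z)^{(n)} u}$ in $\cH^{(n)}$ for $u = (u_1, \ldots, u_n)^T$, and observe that $\mathcal M$ is invariant for every $S^{(n)}$ with $S \in \alg(Z)$ but fails to be invariant for $W^{(n)}$ (else $\ell(W) = \langle W^{(n)} u, v\rangle = 0$ since $v \perp \mathcal M$). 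This produces block decompositions, with respect to $\mathcal M \oplus \mathcal M^\perp$,
\[
  Z_i^{(n)} = \bbm A_i & B_i \\ 0 & D_i \ebm, \qquad
  W^{(n)} = \bbm \tilde A & \tilde B \\ \tilde C & \tilde D \ebm, \quad \tilde C \neq 0,
\]
and I would then introduce the same diagonal similarity $\sigma := \alpha\, 1_{\mathcal M} + \beta\, 1_{\mathcal M^\perp}$ with $\alpha > \beta > 0$ to be tuned.

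The new ingredient is that the same block structure persists for the matrix-valued operator $\delta(Z^{(n)}) \in \B(\cH^{(nJ)}, \cH^{(nI)})$. Since each scalar polynomial $p_{ij}$ appearing in $\delta$ satisfies $p_{ij}(Z)^{(n)} \in \alg(Z)^{(n)}$, every entry of $\delta(Z^{(n)})$ has $\mathcal M$ as an invariant subspace, so after assembling the $I \times J$ matrix, $\delta(Z^{(n)})$ is block upper triangular with respect to the ampliated decompositions $\mathcal M^{(J)} \oplus (\mathcal M^\perp)^{(J)}$ and $\mathcal M^{(I)} \oplus (\mathcal M^\perp)^{(I)}$. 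Conjugation by $\sigma$ acts entrywise, so only the off-diagonal block is scaled, by $\beta/\alpha$. A one-line triangle-inequality estimate then gives
\[
  \|\delta(\sigma^{-1}Z^{(n)}\sigma)\| \le \|\delta(Z)\|\,(1 + \beta/\alpha) \le (1 - 1/k)(1 + \beta/\alpha),
\]
and the analogous estimate for each coordinate yields $\|\sigma^{-1}Z^{(n)}\sigma\| \le k(1 + \beta/\alpha)$. Exploiting the margins $1 - 1/k < 1 - 1/(k+1)$ and $k < k+1$, taking $\beta/\alpha$ sufficiently small places $\sigma^{-1}Z^{(n)}\sigma$ inside $B_\delta(\B(\cH))^{k+1}$.

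Meanwhile, the same similarity satisfies $\|\sigma^{-1}W^{(n)}\sigma\| \ge (\alpha/\beta)\|\tilde C\|$, which is forced above $1$ by the same condition (since $\beta/\alpha$ small is $\alpha/\beta$ large, and $\|\tilde C\| > 0$ means we may always shrink $\beta/\alpha$ further if needed to accommodate all three constraints at once). Setting $s := \sigma U$ for any unitary $U : \cH \to \cH^{(n)}$ delivers the required invertible similarity. The main obstacle I anticipate is purely structural: $\delta(Z^{(n)})$ lives on the ampliated source and target Hilbert spaces $\cH^{(nJ)}$ and $\cH^{(nI)}$, not on a single copy of $\cH^{(n)}$, so the block-upper-triangular property must be traced entrywise from the $\alg(Z)^{(n)}$-invariance of $\mathcal M$ and then lifted through the matrix assembly of $\delta$. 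Once that bookkeeping is in place, the rest is a routine two-parameter balance driven by the strict inequalities provided by the exhausting sequence.
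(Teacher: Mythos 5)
Your proposal is correct and takes essentially the same route as the paper: reuse the Hahn--Banach/invariant-subspace construction of Theorem~3.2 and retune $\alpha>\beta$ so that the three inequalities defining $B_\delta(\B(\cH))^{k+1}$ together with $\|\sigma^{-1}W^{(n)}\sigma\|>1$ all hold. The only difference is that the paper states the feasibility of this tuning tersely (``This is possible because $Z$ lies in the $k$th term''), whereas you spell out the entrywise block-upper-triangularity of $\delta(Z^{(n)})$ with respect to the ampliated decomposition $\mathcal M^{(J)}\oplus(\mathcal M^\perp)^{(J)}\to\mathcal M^{(I)}\oplus(\mathcal M^\perp)^{(I)}$ and the ensuing $(1+\beta/\alpha)$-estimates; that bookkeeping is accurate and is exactly the content the paper is implicitly invoking.
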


\begin{proof}
	By the proof of Theorem 3.2, find positive integer $n$ and a closed subspace $\mathcal M$ of $\cH^{(n)}$ that is invariant for every operator in $\alg(Z)^{(n)}$ but not for $W^{(n)}$. Consider also the map $\sigma:\cH^{(n)}\rightarrow \cH^{(n)}$ in that proof for $\alpha >\beta$ chosen so that the inequalities
 \[
\Vert\delta(\sigma^{-1}Z^{(n)}\sigma)\Vert \leq 1-\frac{1}{k+1}, \,\,\,\,
\Vert \sigma^{-1}Z^{(n)}\sigma\Vert \leq k+1, \,\,\,\,
\Vert \sigma^{-1}W^{(n)}\sigma\Vert>1
\] all hold. This is possible because $Z$ lies in the $k$th term of the exhausting sequence given in (2.2).
Therefore, $s^{-1}Z^{(n)}s\in B_{\delta}(\B(\cH))^{k+1}$ and $\Vert s^{-1}W^{(n)}s\Vert>1$ where $s=\sigma U$ for some $U:\cH\rightarrow\cH^{(n)}$ unitary.  
\end{proof}

With Corollary 3.3 established, we are now in a position to prove our main result.

\begin{proof}[Proof of Theorem 3.1]
	Suppose for contradiction that there is $Z\in B_{\delta}(\B(\cH))$ such that $$W:=\vf(Z)\notin \overline{\alg(Z)}^{{}_{WOT}}.$$ Since the sequence $(B_{\delta}(\B(\cH))^k)_k$ exhausts $B_{\delta}(\B(\cH))$, there is some integer $k$ such that $Z\in B_{\delta}(\B(\cH))^k$. Without loss of generality, we may assume that $\Vert \vf \Vert \leq 1$ on $B_{\delta}(\B(\cH))^{k+1}$. By Corollary 3.3, there is a positive integer $n$ and invertible $s:\cH\rightarrow \cH^{(n)}$ so that $s^{-1}Z^{(n)}s\in B_{\delta}(\B(\cH))^{k+1}$ and $\Vert 
	s^{-1}W^{(n)}s\Vert>1$. We now arrive at a contradiction by using that $\vf$ is operator NC: $$\Vert\vf(s^{-1}Z^{(n)}s)\Vert=\Vert s^{-1}W^{(n)}s\Vert >1.$$  This completes the proof.
\end{proof}

\end{document}